\definecolor{verylight}{gray}{0.97}
\definecolor{light}{gray}{0.93}
\definecolor{medium}{gray}{0.82}
 \def\frk{\frak}               % font for "Fraktur"
 \def\mm{{\frk m}}
 \def\nn{{\frk n}}
 \def\Sc{{\mathcal S}}
 \def\xb{{\bold x}}
 \def\opn#1#2{\def#1{\operatorname{#2}}} % to make operators
 \opn\chara{char} \opn\length{\ell} \opn\pd{pd} \opn\rk{rk}
 \opn\projdim{proj\,dim} \opn\injdim{inj\,dim} \opn\rank{rank}
 \opn\depth{depth} \opn\grade{grade} \opn\height{height}
 \opn\embdim{emb\,dim} \opn\codim{codim}
 \opn\Tr{Tr} \opn\bigrank{big\,rank}
 \opn\superheight{superheight}\opn\lcm{lcm}
 \opn\trdeg{tr\,deg}%\emph{
 \opn\reg{reg} \opn\lreg{lreg} \opn\ini{in} \opn\lpd{lpd}
 \opn\size{size} \opn\sdepth{sdepth}
 \opn\link{link}\opn\fdepth{fdepth}\opn\lex{lex}
 \opn\div{div} \opn\Div{Div} \opn\cl{cl} \opn\Cl{Cl}
 \opn\Spec{Spec} \opn\Supp{Supp} \opn\supp{supp} \opn\Sing{Sing}
 \opn\Ass{Ass} \opn\Min{Min}\opn\Mon{Mon}
 \opn\Ann{Ann} \opn\Rad{Rad} \opn\Soc{Soc}
 \opn\Im{Im} \opn\Ker{Ker} \opn\Coker{Coker} \opn\Am{Am}
 \opn\Hom{Hom} \opn\Tor{Tor} \opn\Ext{Ext} \opn\End{End}
 \opn\Aut{Aut} \opn\id{id}
 \opn\nat{nat}
 \opn\pff{pf}%   \pf exists already
 \opn\Pf{Pf} \opn\GL{GL} \opn\SL{SL} \opn\mod{mod} \opn\ord{ord}
 \opn\Gin{Gin} \opn\Hilb{Hilb}\opn\sort{sort}
 \opn\aff{aff} \opn
\opn\relint{relint} \opn\st{st}
 \opn\lk{lk} \opn\cn{cn} \opn\core{core} \opn\vol{vol}
 \opn\link{link} \opn\star{star}\opn\lex{lex}\opn\set{set}
 \opn\gr{gr}
 \def\pot#1#2{#1[\kern-0.28ex[#2]\kern-0.28ex]}
 \opn\dirlim{\underrightarrow{\lim}}
 \opn\inivlim{\underleftarrow{\lim}}
 \let\sect=\cap
 \let\dirsum=\oplus
 \let\Union=\bigcup
 \let\Sect=\bigcap
 \let\Dirsum=\bigoplus
 \let\to=\rightarrow
 \def\Implies{\ifmmode\Longrightarrow \else
         \unskip${}\Longrightarrow{}$\ignorespaces\fi}
 \def\implies{\ifmmode\Rightarrow \else
         \unskip${}\Rightarrow{}$\ignorespaces\fi}
 \def\iff{\ifmmode\Longleftrightarrow \else
         \unskip${}\Longleftrightarrow{}$\ignorespaces\fi}
 \newtheorem{Theorem}{Theorem}[section]
 \newtheorem{Corollary}[Theorem]{Corollary}
 \newtheorem{Proposition}[Theorem]{Proposition}
 \newtheorem{Example}[Theorem]{Example}
 \let\epsilon\varepsilon
 \let\kappa=\varkappa
 \def\qed{\ifhmode\textqed\fi
       \ifmmode\ifinner\quad\qedsymbol\else\dispqed\fi\fi}
 \def\textqed{\unskip\nobreak\penalty50
        \hskip2em\hbox{}\nobreak\hfil\qedsymbol
        \parfillskip=0pt \finalhyphendemerits=0}
 \def\dispqed{\rlap{\qquad\qedsymbol}}
 \opn\dis{dis}
 \def\pnt{{\raise0.5mm\hbox{\large\bf.}}}
 \opn\Lex{Lex}
\begin{document}
  %\linenumbers

 \title {Ordinary and symbolic powers are Golod}

 \author {J\"urgen Herzog and  Craig Huneke}

\address{J\"urgen Herzog, Fachbereich Mathematik, Universit\"at Duisburg-Essen, Campus Essen, 45117
Essen, Germany}
\email{juergen.herzog@uni-essen.de}

\address{Craig Huneke, Department of Mathematics, University of Virginia,
1404 University Ave,
Charlottesville,  VA 22903-2600,
United States}
\email{huneke@virginia.com}

\subjclass[2000]{13A02, 13D40}
\keywords{Powers of ideals, Golod rings, Koszul cycles}

 \begin{abstract}
 Let $S$ be a positively  graded polynomial ring  over a field of characteristic $0$, and $I\subset S$  a proper graded ideal. In this note it is shown that $S/I$ is Golod if $\partial(I)^2\subset I$. Here $\partial(I)$ denotes the ideal generated by all the partial derivatives of elements of $I$. We apply this result to find large classes of Golod ideals, including powers, symbolic powers, and saturations of ideals.
 \end{abstract}

\thanks{Part of the paper was written while the authors were visiting MSRI at Berkeley. They wish to acknowledge the support, the hospitality and the inspiring atmosphere of this institution. The second author was partially suppported by NSF grant 1259142.}

 \maketitle

 \section*{Introduction}
 Let $(R,\mm)$ be a Noetherian local  ring with residue class field $K$, or a standard graded $K$-algebra with graded maximal ideal $\mm$. The formal power series $P_R(t)= \sum_{i \geq 0} \dim_K \Tor_i^{R} (R/\mm,R/\mm) t^i$ is called the {\em Poincar\'{e}  series} of $R$. Though the ring is Noetherian, in general the Poincar\'{e}  series of $R$  is  not a rational function. The first example that showed that $P_R(t)$ is not
necessarily rational was given by Anik \cite{An}.  In the meantime more such examples are known, see \cite{Ro} and its references. On the other hand, Serre showed that $P_R(t)$ is coefficientwise bounded above by the rational series
 \[
 \frac{(1+t)^n}{1-t\sum_{i\geq 1}\dim_K H_i(\xb;R)t^i},
 \]
where $\xb=x_1,\ldots,x_n$ is a minimal system of generators of $\mm$ and where $H_i(\xb;R)$ denotes the $i$th Koszul homology of the sequence $\xb$.

The ring $R$ is called {\em Golod}, if $P_R(t)$ coincides with this upper bound given by Serre. There is also a relative version of Golodness which is defined for local homomorphisms as an obvious extension of the above concept of Golod rings. We refer the reader for details regarding Golod rings and Golod homomorphism to the survey article \cite{Av} by Avramov. Here we just want to quote the following result of Levin \cite{Le} which says that for any Noetherian local ring $(R,\mm)$,  the canonical map $R\to R/\mm^k$ is a Golod homomorphism for all $k\gg 0$. It is natural to ask whether in this statement $\mm$ could be replaced by any other proper ideal of $R$. Some very recent results indicate that this question may have a positive answer. In fact, in \cite{HWY} it is shown that if $R$ is regular, then for any proper ideal $I\subset R$ the residue class ring  $R/I^k$ is Golod for $k\gg 0$, which, since $R$ is regular, is equivalent to  saying that the residue class map $R\to R/I^k$ is a Golod homomorphism  for $k\gg 0$. But how big $k$ has to be chosen to make sure that $R/I^k$ is Golod? In the case that $R$ is the polynomial ring and $I$ is a proper monomial ideal, the surprising answer is that $R/I^k$ is Golod for all $k\geq 2$, as has been shown by Fakhari and Welker  in \cite{FW}. The authors show even more: if $I$ and $J$ a proper monomial ideals, then $R/IJ$ is Golod. Computational evidence suggests that $R/IJ$ is Golod  for any two proper ideals $I,J$ in a local ring (or graded ideals in a graded ring). This is supported by a result of Huneke \cite{Hu} which says that for an unramified regular local ring $R$, the residue class ring $R/IJ$ is never Gorenstein, unless $I$ and $J$ are principal ideals:  Indeed, being Golod implies in particular that the Koszul homology $H(\xb;R)$ admits trivial multiplication,  while for a Gorenstein ring, by a result of Avramov and Golod \cite{LAv},  the multiplication map induces for all $i$ a non-degenerate pairing $H_i(\xb;R)\times H_{p-i}(\xb;R) \to H_p(\xb:R)$ where $p$ is place of the top non-vanishing homology of the Koszul homology. In the case that $I$ and $J$ are not necessarily monomial ideals, it is only known that $R/IJ$ is Golod if $IJ=I\sect J$, see \cite{HSt}.

\medskip
In the present note  we consider graded ideals in the  graded polynomial ring  $S=K[x_1,\ldots,x_n]$ over a field $K$ of characteristic $0$ with $\deg x_i=a_i>0$ for $i=1,\ldots,n$. The main result of Section~\ref{diff} is given in  Theorem~\ref{muchbetter} which says  that $S/I$ is Golod if $\partial(I)^2\subset I$. Here $\partial(I)$ denotes the ideal which is generated by all partial derivatives of the generators of $I$. This is easily seen to be independent of the generators of $I$ chosen. We call an ideal {\em  strongly Golod} if  $\partial(I)^2\subset I$. In Section~\ref{classes} it is shown that the class of strongly Golod ideals is closed under several important ideal operations like products, intersections and  certain colon ideals. In particular it is shown that for any $k\geq 2$,  the $k$th power of a graded ideal, as well as its $k$th symbolic power and its  $k$th saturated power, is strongly Golod. We also prove the surprising fact that all  the primary components of a graded ideal $I$ which belong  to the minimal prime ideals of $I$ are  strongly Golod, if $I$ is so. Even more, we prove that every strongly Golod ideal has a primary decomposition in which each
primary component is strongly Golod. We are also able to prove that the integral closure of a strongly Golod monomial ideal
is again strongly Golod; in particular, the integral closure of $I^k$ is always strongly Golod if $I$ is monomial and $k\geq 2$. We
do not know if this last assertion is true for general ideals.

It should be noted that our results, though quite general, do not imply the result of Fakhari and Welker concerning products of monomial ideals. One can easily find products of monomial ideals which are not strongly Golod.  Moreover we have to require that the base field is of characteristic $0$. Actually as the proof will show, it is enough to require in Theorem~\ref{muchbetter} that the characteristic of $K$ is big enough compared with the shifts in the graded free resolution of the ideal.

A preliminary version of this paper by the first author was posted on ArXiv, proving that powers are Golod. After some discussions with the second author, this final
version emerged.

\section{A differential condition for Golodness}
\label{diff}
Let $K$ be a field of characteristic $0$, and $S=K[x_1,\ldots,x_n]$ the graded polynomial ring over $K$ with $\deg x_i=a_i>0$ for $i=1,\ldots,n$, and  let $I\subset S$ be a graded ideal different from $S$. We denote by $\partial(I)$ the ideal which is generated by the partial derivatives $\partial f/\partial x_i$ with  $f\in I$ and $i=1,\ldots,n$.

\begin{Theorem}
\label{muchbetter}
Suppose that $\partial(I)^2\subset I$. Then $S/I$ is Golod.
\end{Theorem}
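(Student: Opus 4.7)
The plan is to verify Golodness of $S/I$ by constructing a trivial Massey operation on the Koszul homology of $R:=S/I$. Write $K_\bullet = K_\bullet(\mathbf{x};S)$ for the Koszul complex of the polynomial ring on $\mathbf{x}=x_1,\dots,x_n$ with basis elements $e_\sigma = e_{i_1}\wedge\cdots\wedge e_{i_p}$, and set $\bar K_\bullet := K_\bullet\otimes_S R$. By the standard criterion recalled in Avramov's survey~\cite{Av}, $R$ is Golod once one finds cycle representatives of a $K$-basis of $H_{\geq 1}(\bar K_\bullet)$ whose pairwise products vanish already on the chain level of $\bar K_\bullet$; all higher Massey products can then be set to zero.

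The engine is a Cartan-type contracting homotopy on $K_\bullet$. In characteristic zero, Euler's identity $\sum_j a_j x_j\,\partial f/\partial x_j = (\deg f)f$ legitimises the degree-preserving map
\[
h(f e_\sigma) := \frac{1}{\deg(f e_\sigma)}\sum_{j=1}^n a_j\,\frac{\partial f}{\partial x_j}\,e_j\wedge e_\sigma,
\]
defined on the strictly-positive-internal-degree part of $K_p$; a direct computation shows $\partial h + h\partial = \mathrm{id}$ there. Since for $p\geq 1$ the module $K_p$ has trivial degree-$0$ component, $h$ is defined on all the Koszul data we will need.

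Given a cycle $\bar z\in Z_p(\bar K_\bullet)$ with $p\geq 1$, lift to $\tilde z\in K_p$ so that $\partial\tilde z = \sum_\sigma f_\sigma e_\sigma$ with $f_\sigma\in I$. The identity $\tilde z = h(\partial\tilde z) + \partial h(\tilde z)$ then shows that $\tilde z$ and $h(\partial\tilde z)$ represent the same class in $\bar K_\bullet$. But the coefficients of $h(\partial\tilde z)$ are partial derivatives of the elements $f_\sigma\in I$, hence lie in $\partial(I)$. Thus every class of $H_{\geq 1}(\bar K_\bullet)$ is represented by a cycle whose lift to $K_\bullet$ has all coefficients in $\partial(I)\cdot S$.

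Picking such "straightened" representatives $\bar b_\alpha$ for a basis of $H_{\geq 1}(\bar K_\bullet)$, the chain-level product $\bar b_\alpha\cdot\bar b_\beta$ in $\bar K_\bullet$ has coefficients in $\partial(I)^2\subset I$, hence vanishes. Declaring $\mu(\bar b_\alpha):=\bar b_\alpha$ and $\mu(\bar b_{\alpha_1},\dots,\bar b_{\alpha_n}):=0$ for all $n\geq 2$ then yields a trivial Massey operation: every defining relation collapses to $\partial(0)=0$, and $R$ is Golod by the Avramov–Golod criterion. The main obstacle in this scheme is step two: verifying that $h$ really is a contracting homotopy in the weighted-graded setting and that the straightened representatives land in $\partial(I)S$. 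The characteristic-zero hypothesis is essential here, entering precisely through the division by $\deg(fe_\sigma)$; once step two is in place, the pairwise chain-level vanishing of products and the inductive definition of $\mu$ are immediate.
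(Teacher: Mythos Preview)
Your argument is correct, and it reaches the same endpoint as the paper's proof---namely, every class in $H_{\geq 1}(\bar K_\bullet)$ admits a cycle representative whose lift to $K_\bullet(S)$ has all coefficients in $\partial(I)$, so that pairwise products vanish on the chain level and the trivial Massey operation applies---but the route is genuinely different. The paper obtains such representatives by quoting \cite{H}, where an explicit $K$-basis of $H_l(R)$ is written as linear combinations of Jacobians built from the entries of the maps in the minimal free $S$-resolution of $S/I$; the last factor in each Jacobian is a partial of a generator of $I$, forcing the coefficients into $\partial(I)$. You bypass the free resolution entirely by exhibiting the weighted Cartan homotopy $h$ on $K_\bullet(S)$ and applying the identity $\tilde z = h(\partial\tilde z)+\partial h(\tilde z)$ to any lift of a cycle. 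Your approach is more elementary and self-contained, requiring only the Euler identity and a short homotopy computation rather than the machinery of canonical Koszul cycles; the paper's approach, on the other hand, yields structured representatives tied to the resolution, which can be of independent interest. Both make essential use of characteristic zero in the same place: you divide by $\deg(fe_\sigma)$, while the Jacobian formulas in \cite{H} carry rational coefficients determined by the degrees of the resolution entries.
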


\begin{proof}
We set $R=S/I$, and  denote by $K(R)$ the Koszul complex of $R$ with respect to the sequence  $\xb=x_1,\ldots,x_n$. Furthermore we denote by  $Z(R)$, $B(R)$ and $H(R)$  the module of cycles, boundaries and the homology of $K(R)$.

Golod \cite{Go} showed that Serre's upper bound for the Poincar\'{e} series
 is reached if and only if all Massey operations of $R$ vanish. By definition, this is the case (see \cite[Def. 5.5 and 5.6]{AKM}), if for each subset $\mathcal{S}$ of
  homogeneous elements of $\Dirsum_{i=1}^nH_i(R)$  there exists a function $\gamma$, which is defined on the set of finite
  sequences of elements from $\Sc$  with values in $\mm\dirsum\Dirsum_{i=1}^nK_i(R)$,  subject to the following conditions:
  \begin{enumerate}
    \item[(G1)] if $h\in \Sc$, then $\gamma(h)\in Z(R)$ and $h=[\gamma(h)]$;
    \item[(G2)] if  $h_1,\ldots,h_m$ is a sequence in $\mathcal{S}$ with $m>1$,  then
    \[
      \partial\gamma(h_1,\ldots,h_m)=\sum_{\ell=1}^{m-1}\overline{\gamma(h_1,\ldots,h_\ell)}\gamma(h_{\ell+1},\ldots,h_m),
    \]
  where $\bar{a} = (-1)^{i+1}a$ for $a\in K_i(R)$.
  \end{enumerate}

Note that (G2) implies,  that $\gamma(h_1)\gamma(h_2)$ is a boundary for all $h_1,h_2\in \Sc$ (which in particular implies that the Koszul homology of a Golod ring has trivial multiplication).  Suppose now that for each $\Sc$ we can choose a functions $\gamma$ such that $\gamma(h_1)\gamma(h_2)$ is not only  a boundary  but that $\gamma(h_1)\gamma(h_2)=0$ for all $h_1,h_2\in \Sc$.  Then obviously we may set $\gamma(h_1,\ldots h_r)=0$  for all $r\geq 2$, so that in this case (G2) is satisfied and $R$ is Golod.

The proof of Theorem~\ref{muchbetter} follows, once we have shown that $\gamma$ can be chosen that $\gamma(h_1)\gamma(h_2)=0$ for all $h_1,h_2\in \Sc$. For the proof of this fact we use the following result from \cite{H}: Let
\[
0\to F_p\to F_{n-1}\to \cdots \to F_1\to F_0\to S/J\to 0
\]
be the graded minimal free $S$-resolution of $S/J$, and for each $i$ let $f_{11},\ldots, f_{ib_i}$ a homogeneous basis of $F_i$. Let $\varphi_i\: F_i\to F_{i-1}$ denote the chain maps in the resolution, and let
\[
\varphi_i(f_{ij})=\sum_{k=1}^{b_{i-1}}\alpha_{jk}^{(i)}f_{i-1,k},
\]
where the $\alpha_{jk}^{(i)}$ are  homogeneous polynomials.

In \cite[Corollary 2]{H} it is shown that for all $l=1,\ldots,p$  the elements
\[
\sum_{1\leq i_1<i_2<\cdots <i_l\leq n}a_{i_1}a_{i_2}\cdots a_{i_l}\sum_{j_2=1}^{b_{l-1}}\cdots \sum_{j_l=1}^{b_1}c_{j_1,\ldots,j_l}\frac{\partial(\alpha_{j_1,j_2}^{(l)},\alpha_{j_2,j_3}^{(l-1)},\ldots, \alpha_{j_l,1}^{(1)})}{\partial(x_{i_1},\ldots,x_{i_l})}e_{i_1}\wedge \cdots \wedge e_{i_l},
\]
$j_1=1,\ldots,b_l$ are cycles of $K(R)$ whose homology classes form a $K$-basis of $H_l(R)$.

Thus we see that a $K$-basis of $H_l(R)$ is given by cycles which are linear combinations of Jacobians determined by the entries $\alpha_{jk}^{(i)}$  of the matrices describing the resolution of $S/J$. The coefficients $c_{j_1,\ldots,j_l}$ which appear in these formulas are rational numbers determined by the degrees of the $\alpha_{jk}^{(i)}$, and the elements $e_{i_1}\wedge \cdots \wedge e_{i_l}$ form the natural $R$-basis of the free module  $K_l(R)=\bigwedge^l(\Dirsum_{i=1}^nRe_i)$.

From this result it follows that any homology class of $H_l(R)$ can be represented by a cycle which is a linear combination of Jacobians of the form

\begin{eqnarray}
\label{jacobian}
\frac{\partial(\alpha_{j_1,j_2}^{(l)}\alpha_{j_2,j_3}^{(l-1)},\ldots, \alpha_{j_l,1}^{(1)})}{\partial(x_{i_1},\ldots,x_{i_l})}.
\end{eqnarray}
We choose such representatives  for the elements of the set $\Sc$. Thus we may choose the map $\gamma$ in such a way that it assigns to each element of $\Sc$ a cycle which is a linear combination of Jacobians as in  (\ref{jacobian}).

The elements $\alpha_{j_l,1}^{(1)}$, generate $I$. Thus it follows that $\gamma(h)\in Z(R)\sect (\partial I) K(R)$ for all $h\in \Sc$, and hence our assumption, $\partial(I)^2\subset I$, implies that $\gamma(h_1)\gamma(h_2)=0$ for any two elements $h_1,h_2\in \Sc$.
\end{proof}

\section{Classes of Golod ideals}
\label{classes}
  We keep the notation and the assumptions of Section~\ref{diff} and apply Theorem~\ref{muchbetter} to exhibit new classes of Golod rings.

It is customary to call a graded  ideal $I\subset S$ a {\em Golod ideal}, if $S/I$ is Golod. For convenience, we call  a graded ideal $I\subset S$ {\em strongly Golod}, if $\partial(I)^2\subset I$. As we have shown in Theorem~\ref{muchbetter}, any strongly Golod ideal is Golod.

As usual we denote by $I^{(k)}$ the symbolic powers  and by $\widetilde{I^k}$ the saturated powers of $I$. Recall that $I^{(k)}=\Union_{t\geq 1} I^k: L^t$,  where $L$ is the intersection of all associated, non-minimal  prime ideals of $I^k$, while   $\widetilde{I^k}=\Union_{t\geq 1} I^k: \mm^t$ where $\mm$ is the graded maximal ideal of  $S$.

Since $S$ is Noetherian, there exists an integer $t_0$ such that $I^{(k)}= I^k: L^t$ for all $t\geq t_0$. In particular, if we let  $J=L^t$ for some    $t\geq t_0$,  then $I^{(k)}=I:J=I:J^2$.

\medskip
The next result shows that strongly Golod ideals behave well with respect to several important ideal operations; in particular combining the various parts of
the theorem yields a quite large class of Golod ideals.

\begin{Theorem}
\label{twodaysbeforemy71thbirthday}
Let $I,J\subset S$ be graded ideals. Then the following hold:
\begin{enumerate}
\item[(a)] if $I$ and $J$ are strongly Golod, then $I\sect J$ and $IJ$ are strongly Golod;
\item[(b)] if $I$ and $J$ are strongly Golod and $\partial(I)\partial(J)\subset I+J$, then $I+J$ is strongly Golod;
\item[(c)] if  $I$ is strongly Golod,  $J$ is arbitrary,  and $I: J=I:J^2$, then $I:J$ is strongly Golod;
\item[(d)] $I^k$,  $I^{(k)}$ and  $\widetilde{I^k}$ are strongly Golod for all $k\geq 2$.
\end{enumerate}
\end{Theorem}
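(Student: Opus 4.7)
The plan is to attack each part by exploiting the Leibniz rule $\partial(fg)=f\partial(g)+g\partial(f)$, reducing the verification of the strong Golod condition $\partial(\cdot)^2\subset(\cdot)$ to straightforward algebraic manipulations with the given ideals.

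Parts (a) and (b) are direct Leibniz computations. For $IJ$ one has $\partial(IJ)\subset I\partial(J)+J\partial(I)$; squaring and invoking $\partial(I)^2\subset I$, $\partial(J)^2\subset J$ together with $\partial(I)\partial(J)\subset S$ gives
\[
\partial(IJ)^2\subset I^2J+IJ\,\partial(I)\partial(J)+IJ^2\subset IJ.
\]
For $I\cap J$, the inclusion $\partial(I\cap J)\subset\partial(I)$ forces $\partial(I\cap J)^2\subset\partial(I)^2\subset I$, and symmetrically $\subset J$, hence $\subset I\cap J$. Part (b) is immediate: $\partial(I+J)^2\subset\partial(I)^2+\partial(I)\partial(J)+\partial(J)^2\subset I+(I+J)+J=I+J$ by the cross-term hypothesis.

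The main obstacle is (c). Iterating $I:J=I:J^2$ yields $I:J=I:J^k$ for every $k\geq 1$, so it suffices to prove $\partial(I:J)^2\cdot J^3\subset I$. For $h_1,h_2\in I:J$ and $g_1,g_2\in J$ we have $h_ig_i\in I$, and strong Golodness of $I$ gives $\partial(h_1g_1)\partial(h_2g_2)\in\partial(I)^2\subset I$. Expanding by Leibniz and rearranging yields
\[
g_1g_2\,\partial(h_1)\partial(h_2)=\partial(h_1g_1)\partial(h_2g_2)-h_1h_2\,\partial(g_1)\partial(g_2)-h_1g_2\,\partial(g_1)\partial(h_2)-g_1h_2\,\partial(h_1)\partial(g_2).
\]
The first, third and fourth summands on the right lie in $I$ (using $h_ig_j\in I$ for all $i,j$), leaving $g_1g_2\,\partial(h_1)\partial(h_2)\equiv -h_1h_2\,\partial(g_1)\partial(g_2)\pmod I$. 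Multiplying by a third factor $g_3\in J$, the residual becomes $(g_3h_1)\cdot h_2\,\partial(g_1)\partial(g_2)\in I$ since $g_3h_1\in I$. Thus $g_1g_2g_3\,\partial(h_1)\partial(h_2)\in I$. The reason the third $g_3$ is needed (and hence why the iteration trick $I:J=I:J^3$ is essential) is that $(I:J)^2\partial(J)^2$ is only visibly in $I:J$, not in $I$, so one extra $J$-factor is required to push it all the way into $I$.

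Part (d) is then a corollary. Since $\partial(I^k)\subset I^{k-1}\partial(I)\subset I^{k-1}$, we get $\partial(I^k)^2\subset I^{2k-2}\subset I^k$ as soon as $k\geq 2$, so every such power is strongly Golod. For symbolic and saturated powers, Noetherianness of $S$ yields $I^k:L^t=I^k:L^{2t}$ and $I^k:\mm^t=I^k:\mm^{2t}$ for $t$ sufficiently large, so part (c) applied to $I^k$ (with $J=L^t$, respectively $J=\mm^t$) shows that $I^{(k)}$ and $\widetilde{I^k}$ are strongly Golod.
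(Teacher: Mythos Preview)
Your proof is correct and follows essentially the same route as the paper's: the same Leibniz expansions for parts (a), (b), and (d), and the same expansion of $\partial(h_1g_1)\partial(h_2g_2)\in\partial(I)^2\subset I$ in part (c), together with the observation $I:J=I:J^3$ obtained from iterating the hypothesis. The only cosmetic difference is that you state the iteration $I:J=I:J^k$ explicitly, whereas the paper simply invokes $I:J^3=I:J$ at the end.
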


\begin{proof}
To simplify notation we write  $\partial f$ to mean anyone of the partials  $\partial f/\partial x_i$.

(a) Let $f,g\in I\sect J$. Since $I$ and $J$ are strongly Golod, it follows that $(\partial f)(\partial g)\in I$ and $(\partial f)(\partial g)\in J$, and hence   $(\partial f)(\partial g)\in I\sect J$. This shows that $I\sect J$ is strongly Golod.

Due to the product rule for partial derivatives it follows that $\partial(IJ)\subset \partial(I)J+I\partial(J)$. This implies that
\[
\partial(IJ)^2\subset \partial(I)^2J^2 + \partial(I)\partial(J)IJ+\partial(J)^2I^2.
\]
Obviously, the middle term is contained in $IJ$, while $\partial(I)^2J^2\subset IJ^2\subset IJ$, since $I$ is strongly Golod. Similarly, $\partial(J)^2I^2\subset IJ$. This shows that $\partial(IJ)^2\subset IJ$, and proves that $IJ$ is strongly Golod.

(b) is proved in the same manner as (a).

(c) Let $f,g\in I:J$. Then for all $h_1,h_2\in J$ one has $fh_1\in I$ and $gh_2\in I$. This implies that $(\partial f)h_1+f\partial h_1\in \partial(I)$ and
$(\partial g)h_2+g\partial h_2\in \partial(I)$. Thus
\begin{eqnarray*}
&&((\partial f)h_1+f\partial h_1)((\partial g)h_2+g\partial h_2)\\
&=&(\partial f)(\partial g)h_1h_2+(\partial f)(\partial h_2)gh_1+(\partial g)(\partial h_1)fh_2 +fg (\partial h_1)(\partial h_2)\in (\partial I)^2\subset I.
\end{eqnarray*}
Since $gh_1,fh_2\in I$, it follows that $(\partial f)(\partial h_2)gh_1+(\partial g)(\partial h_1)fh_2\in I$. Moreover,   $Jfg (\partial h_1)(\partial h_2)\subset I$.  Hence  $J(\partial f)(\partial g)h_1h_2\subset I$. Since $h_1,h_2$ were
arbitrary in $J$, it then follows that    $(\partial(I:J))^2\subset I:J^3=I:J$, as desired.

(d)  Let $k\geq 2$. Then $\partial(I^k)\subset I^{k-1}\partial(I)$. It follows that $\partial(I^k)^2\subset I^{2k-2}\partial(I)^2\subset I^k$. Thus $I^k$ is strongly Golod.

As explained above, for appropriately chosen $J$, $I^{(k)}=I^k:J=^kI:J^2$.  Now it follows from   (c) that $I^{(k)}$ is strongly Golod.

The same arguments show that $\widetilde{I^k}$ is strongly Golod.
\end{proof}

\begin{Corollary}
\label{mm}
Let $P$ be a homogeneous prime ideal of $S$ containing $I$, a  strongly Golod ideal. Then $I+P^k$ is strongly Golod for all $k\geq 2$.
\end{Corollary}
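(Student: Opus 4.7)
\medskip
\noindent\emph{Proof plan.} I plan to apply Theorem~\ref{twodaysbeforemy71thbirthday}(b) to the pair of ideals $I$ and $P^k$. Since $I$ is strongly Golod by hypothesis and $P^k$ is strongly Golod by part~(d) of the same theorem (this is where $k\geq 2$ is used), the only remaining task is to verify the mixed hypothesis $\partial(I)\,\partial(P^k)\subset I+P^k$.

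The central observation is that $\partial(I)\subset P$. This is a primality argument: strong Golodness of $I$ gives $\partial(I)^2\subset I\subset P$, so any $f\in\partial(I)$ satisfies $f^2\in P$, and since $P$ is prime this forces $f\in P$; thus $\partial(I)\subset P$ as ideals. Combining this with the Leibniz inclusion $\partial(P^k)\subset P^{k-1}\partial(P)$ already exploited in the proof of Theorem~\ref{twodaysbeforemy71thbirthday}(d), I obtain
\[
\partial(I)\,\partial(P^k) \;\subset\; \partial(I)\cdot P^{k-1}\cdot\partial(P) \;\subset\; P\cdot P^{k-1}\cdot S \;=\; P^k \;\subset\; I+P^k,
\]
which is exactly the hypothesis required by Theorem~\ref{twodaysbeforemy71thbirthday}(b); the corollary then follows at once.

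There is no serious obstacle: the only non-formal ingredient is the step $\partial(I)^2\subset P\Rightarrow\partial(I)\subset P$, which is immediate from the primality of $P$. Everything else is routine bookkeeping with the product rule and the already established strong Golodness of $P^k$.
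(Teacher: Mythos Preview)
Your proof is correct and follows exactly the same approach as the paper: both argue that $\partial(I)\subset P$ via primality (the paper phrases this contrapositively, you phrase it directly as $f^2\in P\Rightarrow f\in P$), use $\partial(P^k)\subset P^{k-1}$ to obtain $\partial(I)\,\partial(P^k)\subset P^k\subset I+P^k$, and then invoke Theorem~\ref{twodaysbeforemy71thbirthday}(b) and (d).
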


\begin{proof}
First note that  $\partial(I)\subset P$. If not, then  $\partial(I)^2$ will also not be in $P$, and therefore not in $I$, a contradiction. We also
observe that  $\partial(P^k)\subset P^{k-1}$. It follows that $(\partial I)(\partial P^k)\subset P^k\subset I+P^k$. Thus the assertion follows from Theorem~\ref{twodaysbeforemy71thbirthday}(b) and (d) (we apply (d) to ensure that $P^k$ is strongly Golod). \end{proof}

Let $R=S/I$  and  $\nn$ the graded maximal ideal of $R$, and suppose that $I$ is strongly Golod. Then Corollary~\ref{mm} implies that $R/\nn^k$ is Golod for all $k\geq 2$. Also note that by the theorem of Zariski-Nagata (see, e.g., \cite[p.143]{N}), the fact that $\partial(I)$ is contained in every homogeneous prime containing
$I$ implies that $I$ is in the second symbolic power of every such prime. In particular, self-radical ideals are never strongly Golod, though they may be Golod.

\begin{Corollary}
\label{components}
The (uniquely determined) primary components belonging to the minimal prime ideals  of a strongly Golod ideal are strongly Golod.
\end{Corollary}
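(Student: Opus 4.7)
The plan is to reduce the problem to Theorem~\ref{twodaysbeforemy71thbirthday}(c) by exhibiting the primary component of a minimal prime as a colon ideal of $I$ by a suitable power, and then exploiting the Noetherian stabilization of the chain $I:J \subset I:J^2 \subset \cdots$.

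Let $I\subset S$ be strongly Golod, let $P$ be a minimal associated prime of $I$, and let $Q$ be the (uniquely determined) $P$-primary component of $I$. First I would choose a convenient auxiliary ideal: take
\[
L \;=\; \bigcap_{P' \in \Ass(S/I),\ P'\neq P} P'.
\]
The key observation is that $L\not\subset P$. Indeed, if $L \subset P$, then by prime avoidance some $P'\in\Ass(S/I)$ with $P'\neq P$ would satisfy $P'\subset P$; but then $P'$ contains some minimal prime $P''$ of $I$, forcing $P''\subset P$ and hence $P''=P=P'$ by minimality of $P$, a contradiction.

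Next I would identify $Q$ as a stable colon. By the standard description of primary components at minimal primes, $Q = \Union_{t\geq 1} (I : L^t)$, and by Noetherianity the ascending chain $I:L \subset I:L^2 \subset \cdots$ stabilizes, so there exists $t_0$ with
\[
Q \;=\; I:L^{t} \;=\; I:L^{t+1} \;=\; \cdots \quad \text{for all } t\geq t_0.
\]
In particular, setting $J := L^{t_0}$, we have $I:J = I:J^2 = Q$.

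With this in hand, Theorem~\ref{twodaysbeforemy71thbirthday}(c) applies directly: $I$ is strongly Golod, $J$ is arbitrary, and the colon-stabilization condition $I:J = I:J^2$ holds, so $I:J = Q$ is strongly Golod. The main (and essentially only) obstacle is verifying $L\not\subset P$, which was dealt with above using the minimality of $P$; the rest is a clean application of part (c) of the previous theorem.
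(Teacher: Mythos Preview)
Your proof is correct and follows essentially the same approach as the paper: express the $P$-primary component as a stable colon ideal $I:J=I:J^2$ and invoke Theorem~\ref{twodaysbeforemy71thbirthday}(c). The only difference is that the paper takes $L$ to be the intersection of the \emph{minimal} primes other than $P$, whereas you take the intersection of \emph{all} associated primes other than $P$; your choice is in fact the more careful one, since it guarantees $L\subset P'$ for every embedded prime $P'$ as well, so that $I:L^t$ really stabilizes at $Q$.
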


\begin{proof}
Let $I$ be strongly Golod and  $P_1,\ldots,P_s$ its minimal prime ideals.  Let $Q_i$ be the primary component of $I$ with $\Ass(S/Q_i)=\{P_i\}$, and set $L_i=\Sect_{j\neq i}P_j$. Then there exists an integer $r>1$ such that $Q_i=I:L_i^r=I:L_i^{2r}$. It follows from  Theorem~\ref{twodaysbeforemy71thbirthday}(c)  that $Q_i$ is strongly Golod.
\end{proof}

\begin{Corollary}
\label{components1}
Every strongly Golod ideal has a primary decomposition with strongly Golod primary ideals.
\end{Corollary}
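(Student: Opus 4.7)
The plan is to build a primary decomposition of $I$ whose components fall directly under Corollaries~\ref{mm} and~\ref{components}. For each associated prime $P_j$ of $I$, I will fix one common large integer $k$ and take $\tilde Q_j$ to be the $P_j$-primary component of the auxiliary ideal $I+P_j^k$.

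That each $\tilde Q_j$ is strongly Golod is straightforward. Since $P_j\supset I$, one has $\sqrt{I+P_j^k}=P_j$, so $P_j$ is the unique minimal prime of $I+P_j^k$. Corollary~\ref{mm} shows that $I+P_j^k$ is strongly Golod for $k\geq 2$, and then Corollary~\ref{components}, applied to the strongly Golod ideal $I+P_j^k$ with its unique minimal prime $P_j$, forces the $P_j$-primary component $\tilde Q_j$ to be strongly Golod as well.

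The heart of the argument is to show $I=\bigcap_j\tilde Q_j$ for a suitable $k$. The inclusion $I\subset\bigcap_j\tilde Q_j$ is immediate from $I\subset I+P_j^k\subset\tilde Q_j$. For the reverse, I would fix \emph{any} primary decomposition $I=\bigcap_i\hat Q_i$ with $\hat Q_i$ being $P_i$-primary, and exploit the fact that each $\hat Q_i$ contains a power of its radical. Since $\Ass(S/I)$ is finite, a single integer $k$ may be chosen with $P_i^k\subset\hat Q_i$ for every $i$ simultaneously. For such a $k$ and any $f\in\bigcap_j\tilde Q_j$, the description $\tilde Q_i=(I+P_i^k)S_{P_i}\cap S$ supplies an element $s_i\notin P_i$ with $s_if\in I+P_i^k\subset\hat Q_i$; the $P_i$-primariness of $\hat Q_i$ then forces $f\in\hat Q_i$, and intersecting over $i$ yields $f\in I$. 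I do not expect a substantive obstacle beyond the uniform choice of $k$, which is routine because $\Ass(S/I)$ is finite; the rest is bookkeeping about primary components.
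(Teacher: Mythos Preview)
Your proposal is correct and follows essentially the same route as the paper: form $I+P^k$ for each associated prime $P$, invoke Corollaries~\ref{mm} and~\ref{components} to get strongly Golod $P$-primary components, and recover $I$ as their intersection by comparing with a fixed primary decomposition after localizing at each $P$. The only cosmetic difference is that you spell out the localization step explicitly via $\tilde Q_i=(I+P_i^k)S_{P_i}\cap S$ and a multiplier $s_i\notin P_i$, whereas the paper simply says the inclusion $\tilde Q_i\subset \hat Q_i$ is clear after localizing; your uniform choice of $k$ is likewise just a harmless repackaging of the paper's ``for large $k$''.
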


\begin{proof} Let $P$ be an associated prime of $I$. By Corollary \ref{mm},  $I+P^k$ is strongly Golod for all $k\geq 2$, and then by Corollary \ref{components}
the unique $P$-primary minimal component of $I+P^k$ is also strongly Golod. We denote this component by $P_k$. The Corollary now follows from the
general fact that $I = \cap P_k$ for large $k$, where the intersection is taken over all associated primes of $I$. To prove this, fix a primary decomposition
of $I$, say $I = \cap Q_P$, where $Q_P$ is $P$-primary, and the intersection runs over all associated primes of $I$. It suffices to prove that
$P_k\subset Q_P$ for large $k$, since then $I\subset \cap P_k\subset \cap Q_P = I$. To check that $P_k\subset Q_P$, we may localize at $P$, since
both of these ideals are $P$-primary. Then the claim is clear.
\end{proof}

Statement (d) of Theorem~\ref{twodaysbeforemy71thbirthday} can be substantially generalized follows.

\begin{Theorem}
\label{inbetween}
Let $I\subset S$  be a homogeneous ideal and suppose  suppose that $(I^{(k-1)})^2\subset I^k$ for some $k\geq 2$. Then all homogeneous ideals $J$ with $I^k\subset J\subset  I^{(k)}$ are strongly Golod. In particular, any homogeneous ideal $J$ with $I^2\subset J\subset I^{(2)}$ is strongly Golod.
\end{Theorem}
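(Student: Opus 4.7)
\emph{Plan.} The whole theorem reduces to proving the single containment
\begin{equation*}
\partial\bigl(I^{(k)}\bigr)\subset I^{(k-1)}.
\end{equation*}
Granted this, for any $J$ with $I^k\subset J\subset I^{(k)}$ one has
\begin{equation*}
\partial(J)^2\subset\partial\bigl(I^{(k)}\bigr)^2\subset\bigl(I^{(k-1)}\bigr)^2\subset I^k\subset J,
\end{equation*}
where the third inclusion is the standing hypothesis. So $J$ is strongly Golod, and therefore Golod by Theorem~\ref{muchbetter}. The ``in particular'' statement is the case $k=2$, in which the hypothesis $(I^{(1)})^2\subset I^2$ is trivially satisfied whenever $I^{(1)}=I$ (e.g.\ whenever $I$ is unmixed).

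To prove the displayed containment, I would localize at the minimal primes of $I$. Fix $f\in I^{(k)}$, an index $i$, and a minimal prime $P$ of $I$. Because $P$ is minimal, by the definition of $I^{(k)}$ there is some $s\in S\setminus P$ with $sf\in I^k$; the Leibniz rule then gives
\begin{equation*}
s\,\partial f/\partial x_i=\partial(sf)/\partial x_i-f\,\partial s/\partial x_i.
\end{equation*}
The first summand lies in $\partial(I^k)\subset I^{k-1}$, by the standard fact that $\partial$ lowers the ordinary-power index by one (apply the product rule to a generator $a_1\cdots a_k\in I^k$). For the second summand, $f\in I^{(k)}\subset I^{(k-1)}$, and hence $f\,\partial s/\partial x_i\in I^{(k-1)}$. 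Combining these, $s\,\partial f/\partial x_i\in I^{(k-1)}\subset I^{k-1}S_P$, and since $s$ is a unit in $S_P$, it follows that $\partial f/\partial x_i\in I^{k-1}S_P$. Running this over all minimal primes $P$ of $I$ places $\partial f/\partial x_i$ in each $P$-primary component of $I^{k-1}$, hence in $I^{(k-1)}$.

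The main obstacle is that partial differentiation does not respect the colon operation defining symbolic powers on the nose: the Leibniz expansion produces the unwanted term $f\,\partial s/\partial x_i$, which would be fatal if one only knew $f\in I^{(k-1)}$. The hypothesis that $f$ lies in the \emph{higher} symbolic power $I^{(k)}$ is precisely what absorbs this extra term into $I^{(k-1)}$; this is the one non-formal step of the argument, everything else reducing to the Leibniz rule and Theorem~\ref{muchbetter}.
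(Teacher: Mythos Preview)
Your proof is correct and shares the same overall architecture as the paper's: both reduce to the single containment $\partial(I^{(k)})\subset I^{(k-1)}$ and then run the same chain
\[
\partial(J)^2\subset\partial(I^{(k)})^2\subset (I^{(k-1)})^2\subset I^k\subset J.
\]
The difference lies entirely in how that containment is established. The paper appeals to the Zariski--Nagata theorem (in characteristic $0$, $f\in I^{(k)}$ iff every partial of $f$ of order $<k$ lies in $I$), from which $\partial(I^{(k)})\subset I^{(k-1)}$ drops out in one line. You instead give a direct argument: localize at each minimal prime $P$ of $I$, write $sf\in I^k$ for some $s\notin P$, expand $s\,\partial_i f=\partial_i(sf)-(\partial_i s)f$, and observe that both summands lie in $I^{k-1}S_P$. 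This is more elementary in that it avoids citing Zariski--Nagata as a black box and uses only the definition of symbolic powers and the Leibniz rule; the paper's route is shorter but rests on a deeper external result. Your observation that the ``in particular'' clause for $k=2$ is only automatic when $I^{(1)}=I$ (e.g.\ $I$ unmixed) is a careful reading; the paper states it without that caveat.
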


\begin{proof}
We use a theorem of Zariski-Nagata \cite[p.143]{N} according to which $f\in S$ belongs to $I^{(k)}$ if and only if all partials of $f$ of order $<k$ belong to $I$. It follows from this characterization of the $k$th symbolic power of $I$ that $\partial(I^{(k)})\subset I^{(k-1)}$.

Now assume that $I^k\subset J\subset I^{(k)}$. Then
\[
\partial(J)^2\subset \partial(I^{(k)})^2\subset (I^{(k-1)})^2\subset I^k\subset J.
\]
This proves that $J$ is strongly Golod.
\end{proof}

\begin{Example} {\rm Let $X$ be a generic set of points in $\mathbb P^2$, and let $I$ be the ideal of polynomials vanishing at $X$. Bocci and Harbourne \cite{BH} proved  that  $I^{(3)}\subset I^2$. In this case, $(I^{(3)})^2\subset I^4$, so Theorem \ref{inbetween} applies to conclude that every homogeneous ideal
$J$ between $I^3$ and $I^{(3)}$ is strongly Golod.} \end{Example}

\begin{Example} {\rm The hypothesis of Theorem \ref{inbetween} is certainly not always satisfied. For example, let $X$ be a generic $4$ by $4$ matrix,
and let $I$ be the ideal generated by the  $3$ by $3$ minors of $X$. It is well-known that $I$ is a height 4 prime ideal. Moreover, if $\Delta$ is the
determinant of $X$, then $\Delta\in I^{(2)}$. However,  $\Delta^2$ cannot be in $I^3$ since this element has degree $8$, and the generators of
$I^3$ have degree $9$. Thus $(I^{(2)})^2$ is not contained in $I^3$.} \end{Example}

Another case to consider when   the hypotheses of Theorem~\ref{inbetween} are satisfied is the following: Let $G$ e a finite simple graph on the vertex set $[n]$. A vertex cover of $G$ is a subset $C\subset [n]$ such that  $C\sect \{i,j\}\neq \emptyset$ for all edges $\{i,j\}$ of $G$.  The vertex cover ideal $I$ of $G$ is the ideal generated by all monomials $\prod_{i\in C}x_i\subset S=K[x_1,\ldots,x_n]$ where $C$ is a vertex cover of $G$. It is obvious that
\[
I= \Sect_{\{i,j\}\in E(G)}(x_i,x_j),
\]
where $E(G)$ denotes the set of edges of $G$.
It follows that $I^{(k)}=\Sect_{\{i,j\}\in E(G)}(x_i,x_j)^k$ for all $k$.

\begin{Proposition}
\label{onecase}
Let $I$ be the vertex cover ideal a finite simple graph $G$, and suppose that $(I^{(2)})^2\subset I^3$. Then  $(I^{(k-1)})^2\subset I^k$ for all $k$. In particular,
every homogeneous ideal $J$ between $I^k$ and $I^{(k)}$ is strongly Golod.
\end{Proposition}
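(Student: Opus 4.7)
My approach is induction on $k$. For $k=2$ the claim is trivially $I^2 \subset I^2$, for $k=3$ it is the hypothesis, and the final assertion about ideals $J$ with $I^k \subset J \subset I^{(k)}$ being strongly Golod then follows directly from Theorem~\ref{inbetween}. So the real content is the inductive step: assuming $(I^{(j-1)})^2 \subset I^j$ for all $j < k$ together with the hypothesis $(I^{(2)})^2 \subset I^3$, I must establish $(I^{(k-1)})^2 \subset I^k$ for $k \geq 4$.

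The key external input I would invoke is the structure theorem for symbolic powers of cover ideals of graphs due to Herzog, Hibi and Trung, which asserts that the symbolic Rees algebra of $I$ is generated over $S$ in $t$-degrees $1$ and $2$. In particular, for $k-1 \geq 2$ it gives the decomposition
\[
I^{(k-1)} \;=\; I \cdot I^{(k-2)} \;+\; I^{(2)} \cdot I^{(k-3)}.
\]
Squaring this produces three types of terms, and my plan is to bound each by $I^k$ using the induction hypothesis, the hypothesis $(I^{(2)})^2 \subset I^3$, and the elementary containments $I^{(a)} \subset I^{(a-1)}$ and $I^{(2)} \subset I$. Specifically, the first square satisfies
\[
(I \cdot I^{(k-2)})^2 \;=\; I^2 \cdot (I^{(k-2)})^2 \;\subset\; I^2 \cdot I^{k-1} \;=\; I^{k+1}
\]
by the induction hypothesis at $j = k-1$; the cross term satisfies
\[
I \cdot I^{(2)} \cdot I^{(k-2)} \cdot I^{(k-3)} \;\subset\; I^2 \cdot (I^{(k-3)})^2 \;\subset\; I^2 \cdot I^{k-2} \;=\; I^k,
\]
using $I^{(2)} \subset I$, $I^{(k-2)} \subset I^{(k-3)}$, and the induction hypothesis at $j = k-2$; and the remaining square satisfies
\[
(I^{(2)} \cdot I^{(k-3)})^2 \;=\; (I^{(2)})^2 \cdot (I^{(k-3)})^2 \;\subset\; I^3 \cdot I^{k-2} \;=\; I^{k+1}
\]
by the hypothesis together with the induction hypothesis at $j = k-2$. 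All three terms therefore lie in $I^k$, completing the step.

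The main obstacle is really just the appeal to the Herzog--Hibi--Trung structure theorem; granted that combinatorial input from the theory of cover ideals, the inductive step is a purely formal calculation with products of ideals and needs no further analysis of the graph $G$. The hypothesis $(I^{(2)})^2 \subset I^3$ is a genuine constraint on $G$ (it fails, for example, for $K_4$, since $(x_1 x_2 x_3 x_4)^2$ has degree $8$ while $I_{K_4}^3$ is generated in degree $9$), but once it holds it propagates to all $k$ through the three-term expansion above.
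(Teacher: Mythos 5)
Your argument is correct, and it reaches the conclusion by a genuinely different route from the paper's. The paper proves the statement via a more general proposition (for unmixed height-two ideals whose symbolic Rees algebra is generated in $t$-degrees $1$ and $2$): it converts the generation hypothesis into the closed forms $I^{(2p+1)}=I(I^{(2)})^{p}$ and $I^{(2p)}=(I^{(2)})^{p}$, splits according to the parity of $k-1$, and in the odd case invokes the Ein--Lazarsfeld--Smith theorem to obtain $I^{(2p)}\subset I^{p}$ and hence $(I^{(2)})^{p}\subset I^{p}$. You instead keep the generation hypothesis in its recursive form $I^{(m)}=I\cdot I^{(m-1)}+I^{(2)}\cdot I^{(m-2)}$ and run an induction on $k$; this avoids the parity split and, more notably, avoids Ein--Lazarsfeld--Smith altogether, so your proof is more elementary and makes transparent that the single hypothesis $(I^{(2)})^{2}\subset I^{3}$ propagates purely formally. (In fact the paper's appeal to ELS is itself dispensable: in the odd case one can bound $((I^{(2)})^{p})^{2}=((I^{(2)})^{2})^{p}\subset I^{3p}\subset I^{2p}$ directly from the hypothesis.) The small facts your write-up leans on --- $I^{(1)}=I$ for the base case $k=2$, and $I^{(2)}\subset I$, $I^{(k-2)}\subset I^{(k-3)}$ for the cross term --- hold because the cover ideal is unmixed, indeed $I^{(k)}=\Sect_{\{i,j\}\in E(G)}(x_i,x_j)^k$, and the $k=4$ instance of your recursion correctly degenerates to $I^{(3)}=I\cdot I^{(2)}$. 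Both proofs rest on the same external input, the Herzog--Hibi--Trung theorem that the symbolic Rees algebra of a vertex cover ideal is generated in degrees $1$ and $2$, and both conclude the final assertion from Theorem~\ref{inbetween}.
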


In \cite[Theorem~5.1]{HHT} it is shown that the graded $S$-algebra $\Dirsum_{k\geq 0}I^{(k)}t^k\subset S[t]$ is  generated in degree $1$ and $2$.  Thus
the proposition follows from the more general proposition:

\begin{Proposition}
\label{onecase}
Let $I$ be an unmixed ideal of height two,  and suppose that  $\Dirsum_{k\geq 0}I^{(k)}t^k$ is generated in degrees $1$ and $2$, and  $(I^{(2)})^2\subset I^3$. Then  $(I^{(k-1)})^2\subset I^k$ for all $k$.
\end{Proposition}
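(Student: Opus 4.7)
The plan is to use the generation hypothesis on $\bigoplus_{k\geq 0}I^{(k)}t^k$ to rewrite $I^{(k-1)}$ as a sum of products $I^a(I^{(2)})^b$ with $a+2b=k-1$, so that $(I^{(k-1)})^2$ becomes a sum of products $I^a(I^{(2)})^b$ with $a+2b=2k-2$. It then suffices to show each such product lies in $I^k$.

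Before the count, I would collect two inclusions. First, unmixedness forces $I^{(2)}\subset I$: writing $I=\bigcap_j Q_j$ with $Q_j$ primary to the minimal prime $P_j$, any $x\in I^{(2)}$ admits $s\notin\bigcup_j P_j$ with $sx\in I^2\subset Q_j$, and since $Q_j$ is $P_j$-primary and $s\notin P_j$ we get $x\in Q_j$ for every $j$, hence $x\in I$. Second, iterating the hypothesis $(I^{(2)})^2\subset I^3$ gives $(I^{(2)})^{2c}\subset I^{3c}$, and combining with $I^{(2)}\subset I$ gives $(I^{(2)})^{2c+1}\subset I^{3c}\cdot I^{(2)}\subset I^{3c+1}$.

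With these, when $b=2c$ the relation $a=2k-2-4c$ yields $I^a(I^{(2)})^b\subset I^{2k-2-c}$, and when $b=2c+1$ the relation $a=2k-4-4c$ yields $I^a(I^{(2)})^b\subset I^{2k-3-c}$. The constraint $a\geq 0$ forces $c\leq(k-1)/2$ in the even case and $c\leq(k-2)/2$ in the odd case; for $k\geq 3$, each bound implies the corresponding exponent is at least $k$ (the inequalities $(k-1)/2\leq k-2$ and $(k-2)/2\leq k-3$ hold for $k\geq 3$ and $k\geq 4$ respectively, with $k=3$ handled by inspection), and the case $k=2$ is trivial since $I^{(1)}=I$.

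The only real obstacle is the indexing, i.e.\ handling both parities of $b$ simultaneously while translating the constraint $a+2b=2k-2$ into exponents that dominate $k$. The substantive inputs are the two inclusions $(I^{(2)})^2\subset I^3$ (by hypothesis) and $I^{(2)}\subset I$ (from unmixedness); the height two assumption enters only in motivating the degrees $1,2$ generation (via \cite{HHT} for vertex cover ideals) and plays no further role in the argument itself.
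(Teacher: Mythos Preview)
Your argument is correct, and it differs from the paper's in a substantive way.

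The paper first simplifies the generation hypothesis to the closed forms $I^{(2p)}=(I^{(2)})^p$ and $I^{(2p+1)}=I(I^{(2)})^p$ (using $I^2\subset I^{(2)}$ to absorb the terms with $a\geq 2$ into the one with $a\in\{0,1\}$), and then splits into the two parity cases for $k-1$. In the odd case it invokes the Ein--Lazarsfeld--Smith theorem, using the height-two assumption to get $I^{(2p)}\subset I^p$ and hence $(I^{(2)})^p\subset I^p$; the even case uses only the hypothesis $(I^{(2)})^2\subset I^3$ iterated. Your proof replaces the ELS input by the much weaker inclusion $I^{(2)}\subset I$, which follows from unmixedness alone, and compensates with a slightly longer exponent count. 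The payoff is that your argument is entirely elementary and, as you correctly observe, never uses the height-two assumption; so you have in fact proved a stronger statement (unmixed suffices). The paper's version is a little cleaner to read, at the cost of citing a deep theorem that is not really needed here.

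Two small presentational points. First, your case analysis would be shorter if you adopted the paper's reduction to $a\in\{0,1\}$ before squaring; then $(I^{(k-1)})^2$ is a single product rather than a sum, and the parity split is on $k-1$ rather than on $b$. Second, the inequality $(k-2)/2\leq k-3$ you wrote fails at $k=3$, but since $c$ is an integer the relevant bound is $\lfloor(k-2)/2\rfloor\leq k-3$, which does hold at $k=3$; so ``handled by inspection'' is unnecessary there.
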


\begin{proof} The assumption that $\Dirsum_{k\geq 0}I^{(k)}t^k\subset S[t]$ is  generated in degree $1$ and $2$ is equivalent to
the statement that for all $p\geq 0$ one has
\[
I^{(2p+1)}=I(I^{(2)})^{p}\quad \text{and}\quad I^{(2p)} = (I^{(2)})^{p}.
\]
Since $I$ is an ideal of height $2$ it follows by a theorem of Ein,  Lazarsfeld, and Smith \cite{ELS} that $I^{(2p)}\subset I^p$ for all $p\geq 0$.
This implies that $(I^{(2)})^p\subset I^p$ for all $p$, since $(I^{(2)})^p\subset I^{(2p)}$.

We want to prove that $(I^{(k-1)})^2\subset I^k$. We consider the two cases where $k$ is even or odd.

We have that
\[
(I^{(2p+1)})^2=I^{2}((I^{(2)})^p)^2\subset I^{2}(I^{p})^2=I^{2p+2}.
\]

Next assume that $k$ is even. Then $(I^{(2p)})^2=(I^{(2)})^{2p}$. Thus
\[
(I^{(2p)})^2=((I^{(2)})^{2})^p\subset (I^3)^p\subset I^{2p+1}.
\]
 Here we used that $(I^{(2)})^{2}\subset I^3$.
\end{proof}

Proposition~\ref{onecase} is trivial if $G$ does not contain an odd cycle, in other words, if $G$ is bipartite, because in this case it is known
\cite[Theorem~5.1(b)]{HHT} that the symbolic and ordinary powers of $I$ coincide. The first non-trivial case is that when $G$ is an odd cycle, say on the vertex set $[n]$. In that case it is known, and easy to see,  that $I$ is generated by the monomials $u_i=\prod_{j=i}^{(n+1)/2}x_{i+2j}$, where for simplicity of notation $x_i=x_{i-n}$ if $i>n$. It is also known that $I^{(2)}=I^2+(u)$, where $u=\prod_{i=1}^nx_i$, see for example \cite[Proposition~5.3]{HHT}. It follows that $(I^{(2)})^2=I^4+I^2(u)+(u^2)$. Since $u\in I$ we see that $I^2(u)\subset I^3$, and since $u_1u_2u_3$ divides $u^2$ we also have $(u^2)\subset I^3$. So that altogether, $(I^{(2)})^2\subset I^3$ for the vertex cover ideal of any odd cycle. We do not whether the same inclusion holds for any graph containing and odd cycle.

\begin{Example}
{\em There  exist squarefree monomial ideals $I$ for which  $(I^{(2)})^2$ is not contained in  $I^3$. Indeed, let $I_{n,d}$ be  the ideal generated by all squarefree monomials of degree $d$ in $n$ variables, and let $2<d<n$. Then $u=\prod_{i=1}^{d+1}x_i\in I_{n,d}^{(2)}$, but $u^2\not \in I_{n,d}^3$, because $I_{n,d}^3$ is generated in degree $3d$, while $\deg u^2=2(d+1)<3d$.
}
\end{Example}

\medskip
It is immediate that a monomial ideal $I$ is strongly Golod, if for all minimal monomial generators $u,v \in I$ and all integers $i$ and $j$ such that  $x_i|u$ and $x_j|v$ it follows that $uv/x_ix_j\in I$.  Farkhari and Welker showed \cite{FW} that $IJ$ is Golod for any two proper monomial ideals. However a product of proper monomial ideals need not to be strongly Golod as the following example shows: Let $I=(x,y)$ and $J=(z)$. Then $IJ=(xz,yz)$ and $(xz)(yz)/xy=z^2$ does not belong to $IJ$.

We denote by $\bar{I}$ the integral closure of an ideal, and use the above characterization of strongly Golod monomial ideals to show:

\begin{Proposition}
\label{integral}
Let $I$ be a strongly Golod monomial ideal. Then $\bar{I}$ is strongly Golod. In particular, for any monomial ideal $I$, the ideals $\overline{I^k}$ are strongly Golod for all $k\geq 2$.
\end{Proposition}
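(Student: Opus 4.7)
The plan is to argue via the Newton polyhedron $P=\mathrm{conv}\{\alpha:x^\alpha\text{ is a minimal generator of }I\}+\RR^n_{\ge 0}$. For a monomial ideal $I$, $\bar I$ is again a monomial ideal, $I$ and $\bar I$ have the same Newton polyhedron, and a monomial $x^a$ lies in $\bar I$ exactly when $a\in P\cap\ZZ^n$. The monomial characterization of strong Golodness recalled just before the proposition extends routinely from minimal generators to arbitrary monomials of the ideal, so the task reduces to showing that whenever $a,b\in P\cap\ZZ^n$ and $a_i\ge 1$, $b_j\ge 1$, the point $a+b-e_i-e_j$ again lies in $P$.

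Since $P+\RR^n_{\ge 0}=P$, every facet normal of $P$ is componentwise nonnegative, and $P$ is cut out by the coordinate inequalities $x_s\ge 0$ together with non-coordinate inequalities $\langle v,x\rangle\ge c$ with $v\in\RR^n_{\ge 0}\setminus\{0\}$ and $c>0$. That $a+b-e_i-e_j$ satisfies each coordinate inequality is an easy case check using $a_i\ge 1$ and $b_j\ge 1$ (the subcase $s=i=j$ needs $a_i+b_i\ge 2$). For a non-coordinate inequality the bounds $\langle v,a\rangle,\langle v,b\rangle\ge c$ reduce everything to the single estimate
\[
c\;\ge\;v_i+v_j\qquad \text{for every non-coordinate facet }(v,c)\text{ of }P.
\]

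To prove this estimate, which is the only place where strong Golodness of $I$ enters, let $F=P\cap\{\langle v,x\rangle=c\}$. Writing $F$ as the Minkowski sum of the convex hull of the minimal generators of $I$ lying on $F$ and the cone $\sum_{s:v_s=0}\RR_{\ge 0}e_s$, one sees that when $v_i>0$ some minimal generator $c_1$ on $F$ must satisfy $(c_1)_i\ge 1$ (otherwise $F\subset\{x_i=0\}$, contradicting $c>0$); similarly for $j$ when $v_j>0$. When $v_i=0$, one instead takes $c_1\in F$ arbitrary and chooses any index $i'$ with $(c_1)_{i'}\ge 1$, and analogously for $j$. Applying strong Golodness of $I$ to the minimal generators $x^{c_1}$ and $x^{c_2}$ at the indices $i',j'$ gives $c_1+c_2-e_{i'}-e_{j'}\in P$; pairing with $v$ yields $c\ge v_{i'}+v_{j'}$. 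Checking the four cases according to the signs of $v_i$ and $v_j$, with $v_{i'}\ge 0$ always and $v_{i'}=v_i$ whenever $v_i>0$ (and symmetrically for $j$), one obtains $c\ge v_i+v_j$ in every case. The ``in particular'' statement then follows by applying the first part to $I^k$, which is a strongly Golod monomial ideal for every $k\ge 2$ by Theorem~\ref{twodaysbeforemy71thbirthday}(d). The main obstacle is the facet estimate itself: ensuring in each of the four sign cases that suitable minimal generators on $F$ with the required divisibility exist, and that the slack inequality $v_{i'}\ge 0$ suffices to recover the bound $c\ge v_i+v_j$.
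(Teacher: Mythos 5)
Your argument is correct, but it proves the proposition by a genuinely different route from the paper. The paper works with the characterization $u\in\bar I\iff u^r\in I^r$ for some $r$ (from \cite[Proposition 1.4.2]{SwaHu}): it fixes even exponents $s,t$ with $u^s\in I^s$, $v^t\in I^t$, and proves the key lemma that if $x_k\mid w$ and $w^r\in I^r$ with $r$ even, then $(w/x_k)^r\in I^{r/2}$ --- by factoring $w^r=m_1\cdots m_r$ into generators, sorting the factors by the exact power of $x_k$ dividing them, and using strong Golodness to pair off the factors divisible by $x_k$ exactly once; the conclusion $(uv/x_ix_j)^{s+t}=(u/x_i)^{s+t}(v/x_j)^{s+t}\in I^{s+t}$ then follows. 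You instead use the equivalent Newton-polyhedron description of $\bar I$ and reduce everything to the facet estimate $c\ge v_i+v_j$, which you extract from strong Golodness applied to minimal generators lying on the facet. Both proofs are complete; the degenerate cases you flag (a facet meeting a coordinate hyperplane, the subcases $v_i=0$ or $v_j=0$, and $i=j$ in the coordinate inequalities) are all handled correctly, and note that strong Golodness does apply with $c_1=c_2$ when the facet carries only one minimal generator. What your approach buys is a clean geometric invariant: a strongly Golod monomial ideal has every non-coordinate facet $\langle v,x\rangle\ge c$ of its Newton polyhedron satisfying $c\ge v_i+v_j$ for all $i,j$, and this condition alone already forces the integral closure to satisfy the strong Golod divisibility condition --- which in particular makes it transparent why integral closure preserves the property. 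What the paper's approach buys is self-containedness: it needs no polyhedral geometry, only the elementary multiplicative characterization of $\bar I$, and its pairing trick is closer in spirit to the proofs of the other closure statements in Theorem~\ref{twodaysbeforemy71thbirthday}.
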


\begin{proof}

The integral closure of the monomial ideal $I$ is again a monomial ideal, and a monomial $u\in S$ belongs to $\bar{I}$ if and only if there exists an integer $r>0$ such that $u^r\in I^r$, see \cite[Proposition 1.4.2]{SwaHu} and the remarks preceding this proposition.

Let $u,v\in \bar{I}$ monomials and suppose that $x_i|u$ and $x_j|v$. There exist integers $s,t>0$ such that $u^s\in I^s$ and $v^t\in I^t$. We may assume that both, $s$ and $t$, are even.

We observe that for a  monomial $w$ with $x_k|w$ and  $w^r\in I^r$ it follows that $(w/x_k)^r\in I^{r/2}$ if $r$ is even.  Indeed, $w^r=m_1\cdots m_r$ with $m_i\in I$. Let $d_i$ be the highest exponent that divides $m_i$. We may assume that $d_i=0$ for $i=1,\ldots, a$, $d_i=1$ for $i=a+1,\ldots,b$ and $d_i>1$ for $i>b$. Then
\[
(w/x_k)^r= (m_1\cdots m_a)((m_{a+1}/x_k)\cdots (m_b/x_k))((m_{b+1}\cdots m_r)/x_k^d),
\]
where $d=\sum_{i=b+1}^rd_i$.

Since $I$ is strongly Golod, it follows  from this presentation of $(w/x_k)^r$ that
\[
(w/x_k)^r\in I^aI^{\lfloor b/2\rfloor}=I^{\lfloor(2a+b)/2\rfloor},
\]
where $\lfloor c\rfloor$ denotes the lower integer part of the real number $c$.

Note that $r=d+b$ and that $2(r-a-b)\leq d$. This implies that  $2a\geq d$, and hence $2a+b\geq d+b=r$,  which implies that $(w/x_k)^r\in I^{r/2}$, as desired.

Now it follows that
\[
(uv/x_ix_j)^{s+t}=(u/x_i)^{t+s}(v/x_j)^{t+s}\in I^{s+t}
\]
This shows that $uv/x_ix_j\in\bar{I}$, and proves that  $\bar{I}$ is strongly Golod.
\end{proof}

\end{document}